\newcommand{\bA}{{\mathbb{A}}}
\newcommand{\bC}{{\mathbb{C}}}
\newcommand{\bF}{{\mathbb{F}}}
\newcommand{\bN}{{\mathbb{N}}}
\newcommand{\bQ}{{\mathbb{Q}}}
\newcommand{\bR}{{\mathbb{R}}}
\newcommand{\bZ}{{\mathbb{Z}}}
\newcommand{\Be}{{\mathbf{e}}}
\newcommand{\Bf}{{\mathbf{f}}}
\newcommand{\Bm}{{\mathbf{m}}}
\newcommand{\Bs}{{\mathbf{s}}}
\newcommand{\Bx}{{\mathbf{x}}}
\newcommand{\BB}{\mathbf{B}}
  \newcommand{\E}{{\mathcal{E}}}
  \newcommand{\M}{{\mathcal{M}}}
  \newcommand{\N}{{\mathcal{N}}}
\renewcommand{\P}{{\mathcal{P}}}
  \newcommand{\Q}{{\mathcal{Q}}}
  \newcommand{\UUU}{{\mathcal{U}}}
  \newcommand{\Y}{{\mathcal{Y}}}
  \newcommand{\Z}{{\mathcal{Z}}}
\newcommand{\UU}{\mathbf{U}}
\newcommand{\ep}{\varepsilon}
\newcommand{\upchi}{{\raise.35ex\hbox{$\chi$}}}
\newtheorem{theorem}{Theorem}[section]
\newtheorem{lemma}[theorem]{Lemma}
\theoremstyle{definition}
\theoremstyle{remark}
\numberwithin{equation}{section}
\begin{document}

\title{Density of power-free values of polynomials}

\author{Kostadinka Lapkova}
\address{Institute of Analysis and Number Theory\\
Graz University of Technology \\
Kopernikusgasse 24/II\\
8010 Graz\\
Austria}
\email{lapkova@math.tugraz.at}

\author{Stanley Yao Xiao}
\address{Department of Mathematics \\
University of Toronto \\
Bahen Centre \\
40 St. George St., Room 6290 \\
Toronto, Ontario, Canada M5S 2E4}
\email{stanley.xiao@math.toronto.edu}
\indent

%%%%%%%%%%%%%%%%%%%%%%%%%%%%%%%%%%%%%%%%%%%%%%%%%%%%%%%%%%%%%%%%%%

\begin{abstract}We establish asymptotic formulae for the number of $k$-free values of square-free polynomials $F(x_1,\cdots,x_n)\in\bZ[x_1,\cdots,x_n]$ of degree $d\geq 2$ for any $n\geq 1$, including when the variables are prime, as long as $k\geq (3d+1)/4$. This generalizes a work of Browning. 
\end{abstract}

\maketitle

\tableofcontents

%%%%%%%%%%%%%%%%%%%%%%%%%%%%%%%%%%%%%%%%%%
\section{Introduction}
\label{Intro}
%%%%%%%%%%%%%%%%%%%%%%%%%%%%%%%%%%%%%%%%%%

In this paper we consider power-free values of polynomials $F(x_1, \cdots, x_n)$ with integer coefficients and degree $d \geq 2$. Put
\begin{equation} \label{main} N_{F,k}(B) = \# \{(x_1, \cdots, x_n) \in \bZ^n : |x_i| \leq B \text{ for } i = 1, \cdots, n, F(x_1, \cdots, x_n) \text{ is } k \text{-free}\}.
\end{equation}
Our goal is to show that $N_{F,k}(B)$ satisfies an asymptotic formula provided that $F$ is not always divisible by $p^{k}$ for a fixed prime $p$, and $k$ is suitably large compared to $d$. \\ \\
The case $k = d-1$ is of particular interest. In the case of polynomials in a single variable, the first to establish the infinitude of $N_{F,d-1} (B)$ was Erd\H{o}s \cite{Erd}. His argument did not establish an asymptotic formula for $N_{F,d-1}(B)$; this had to wait until Hooley \cite{Hoo1}. In the two variable case, the asymptotic formula for $N_{F,d-1}(B)$ is only established by Hooley \cite{Hoo3} in the case when $F$ splits into linear factors over some finite extension of $\bQ$. In this paper we provide an asymtotic formula for $N_{F,d-1} (B)$ for any number of variables $n\geq 1$ and any square-free polynomial $F$ with degree $d\geq 5$.\\ \\
%in \cite{Hoo2} and \cite{Hoo3} respectively. \\ \\
For general $k$ and $n = 1$, various authors have worked on the problem of estimating $N_{F,k}(B)$. The record is a theorem of Browning \cite{B2}, which asserts that the expected asymptotic formula for $N_{F,k}(B)$ holds when $k \geq (3d+1)/4$. 
%\footnote{Browning's bounds are different, to be checked}. 
The main point of our paper is to reduce the problem for general $n$ to the setting of Browning's theorem. \\ 

In the case of multiple variables, most of the work has been done in the case of binary forms only. The asymptotic formula for $N_{F,k}(B)$ for binary forms $F$ was established for $k \geq (d-1)/2$ by Greaves \cite{Gre}, $k > (2\sqrt{2} - 1)d/4$ by Filaseta \cite{Fil}, $k > 7d/16$ by Browning \cite{B2}, and $k > 7d/18$ by Xiao \cite{X1}. For polynomials of more variables, Bhargava handled square-free values for discriminants of representations of some prehomogeneous vector spaces in \cite{Bha}, and Bhargava, Shankar, Wang handled the case of discriminants of polynomials in \cite{BSW}. Xiao handled the case of square-free values of decomposable forms in \cite{X2}. In fact, he obtained an asymptotic relation for $N_{F,2}(B)$ when $F$ is decomposable whenever $d \leq 2n + 2$. \\ 

For inhomogeneous polynomials of two variables the works of Hooley \cite{Hoo2} and Browning \cite{B2} provided \emph{lower bounds} for the number of $k$-free values and Hooley \cite{Hoo3} managed to provide an asymptotic formula in certain cases. There are also several specific inhomogeneous polynomials of more variables whose power-free values were estimated asymptotically, e.g.  \cite{leboudec}, \cite{lapkova}. We should mention that already Poonen \cite{Po} shows that the number of square-free values of multivariable polynomials $F$ has a positive density, assuming the \emph{abc}-conjecture, however the considered density is differently defined than the one arising when we evaluate asymptotically \eqref{main}.\\

The main result in this paper is the following theorem, which asserts that an asymptotic relation for $N_{F,k}(B)$ holds whenever $k \geq (3d+1)/4$, for any $n\geq 1$ and assuming only a necessary condition on the divisibility of the square-free polynomial $F$. Observe that the lower bound for $k$ is the same as in Browning's theorem in \cite{B2}. 

\begin{theorem} \label{MT1} Let $k \geq 2$ be a positive integer and let $F$ be a square-free polynomial with integer coefficients and degree $d \geq 2$ in $n$ variables, such that for all primes $p$, there exists an integer $n$-tuple $(m_1, \cdots, m_n)$ such that $p^k \nmid F(m_1, \cdots, m_n)$. Then there exists a positive number $C_{F,k}$ such that the asymptotic relation
\[N_{F,k}(B) \sim C_{F,k} B^n\]
holds whenever $k \geq (3d+1)/4$. 
\end{theorem}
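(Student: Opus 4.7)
The plan is to reduce the $n$-variable sieve to Browning's one-variable theorem \cite{B2} by Möbius inversion combined with a slicing argument. I begin from the identity
\[N_{F,k}(B) = \sum_{m \geq 1} \mu(m)\, M_F(m^k;B), \qquad M_F(q;B) := \#\{\mathbf{x}\in\bZ^n : \|\mathbf{x}\|_\infty \leq B,\ q \mid F(\mathbf{x})\},\]
noting that $|F(\mathbf{x})| \ll B^d$ forces $m \leq M_{\max} \ll B^{d/k}$. The range of $m$ is split into \emph{small} $m \leq M_1$, \emph{medium} $M_1 < m \leq M_2$, and \emph{large} $m > M_2$, with $M_1$ and $M_2$ suitable powers of $B$ to be optimised.

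For small $m$, standard lattice point counting yields $M_F(m^k;B) = \rho_F(m^k)(2B/m^k)^n + O\!\left(\rho_F(m^k)B^{n-1}/m^{k(n-1)}\right)$, where $\rho_F(q) := \#\{\mathbf{x}\pmod q : F(\mathbf{x})\equiv 0 \pmod q\}$; the fixed-divisor hypothesis guarantees $\rho_F(p^k) < p^{kn}$, so the Euler product
\[C_{F,k} = 2^n \prod_p \left(1 - \frac{\rho_F(p^k)}{p^{kn}}\right)\]
converges absolutely and produces the expected main term $C_{F,k} B^n$. The large range $m > M_2$ is easy: the condition $m^k \mid F(\mathbf{x})$ becomes so restrictive that a direct bound on $\#\{\mathbf{x} : F(\mathbf{x}) = m^k t,\ |t| \leq B^d/m^k\}$ suffices.

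The heart of the proof is the medium range, where the novelty lies. After a preliminary unimodular change of coordinates---possible because the degree-$d$ homogeneous part $F_d$ of $F$ is nonzero---I may assume that $F$, regarded as a polynomial in $x_1$ over $\bZ[x_2,\ldots,x_n]$, has degree exactly $d$ in $x_1$ with a nonzero constant leading coefficient. For each $\mathbf{y} = (x_2,\ldots,x_n)$ set $g_\mathbf{y}(x_1) := F(x_1,\mathbf{y}) \in \bZ[x_1]$. Interchanging the order of summation, the medium-range contribution to $N_{F,k}(B)$ becomes
\[\sum_{\|\mathbf{y}\|_\infty \leq B}\ \sum_{M_1 < m \leq M_2} \mu(m)\, \#\{x_1 : |x_1|\leq B,\ m^k \mid g_\mathbf{y}(x_1)\}.\]
For each $\mathbf{y}$ whose slice polynomial has no fixed $p^k$ divisor in the sieve range, the inner sum is precisely the quantity bounded by Browning's medium-range argument, which yields $o(B)$ exactly under the hypothesis $k \geq (3d+1)/4$. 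Summing over the $O(B^{n-1})$ slices gives $o(B^n)$. Those $\mathbf{y}$ whose slice polynomial does inherit a fixed $p^k$ divisor from a small prime are confined to a proper subvariety by the assumption on $F$ and contribute an acceptable error.

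The main obstacle is \emph{uniformity}: Browning's medium-range bound depends on the coefficients of the polynomial being sieved, whereas the coefficients of $g_\mathbf{y}$ grow polynomially in $\|\mathbf{y}\|_\infty$, up to size $B^{d-1}$. Making the slicing succeed therefore requires a careful revisiting of Browning's medium-range sieve with error terms that are explicit and mild in the coefficient sizes, so that aggregating over the $B^{n-1}$ slices does not destroy the savings on any single slice. This uniform re-examination of the medium-range sieve is the ``different sieving technique'' promised in the abstract and is where the main technical work is concentrated.
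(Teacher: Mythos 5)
Your high-level reduction (slice to one variable, appeal to Browning, worry about coefficient uniformity) is the right instinct, and you have correctly identified uniformity in the coefficients as the central difficulty. But the proposal stops precisely where the hard work begins: you name the obstruction and then defer it to an unspecified ``careful revisiting'' of Browning's sieve, whereas the paper's entire contribution is a concrete mechanism for overcoming it.

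The paper does not use a pure M\"obius decomposition over all $m$ but a simple-sieve sandwich $N_1(B)-N_2(B)-N_3(B)\le N_{F,k}(B)\le N_1(B)$, where $N_1$ controls primes $p\le\xi_1\asymp\log B$, $N_2$ controls medium primes $\xi_1<p\le\xi_2=B(\log B)^{1/2}$ with $p^2\mid F(\Bx)$, and $N_3$ controls large primes $p>\xi_2$ with $p^k\mid F(\Bx)$. Two specific tools make the $n$-variable aggregation succeed. First, the medium range is handled \emph{without slicing}: Ekedahl's geometric sieve (in the Bhargava--Shankar form) is applied globally in $\bA^n$ to the codimension-two variety $V_{F,G}=\{F=\partial F/\partial x_1=0\}$, so that for slices off this variety one has $p\nmid\Delta(F_\Bm)$ and at most $d$ solutions modulo $p^2$, completely sidestepping the slice-dependent discriminant problem you would face by applying a one-variable Ekedahl estimate slice by slice. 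This is the ``different sieving technique for the middle range'' advertised in the abstract, and it is absent from your outline. Second, the large-prime range, where one needs $p^k\mid F(\Bx)$, is where the slicing actually occurs, and the required coefficient-uniformity is not obtained by re-deriving error terms; it is a built-in feature of the global affine determinant method (Heath-Brown, Salberger, Browning), which is proved afresh here in Theorem \ref{affine global} and Lemma \ref{browning}. The exponent threshold $k\ge(3d+1)/4$ emerges from optimising the determinant-method bound $N_3(B)=O(B^{n-1+2\sqrt{\eta(1-k\eta/d)}+\ep})$ at $\eta=1$, an optimisation that is only available because the Ekedahl step has already pushed $\xi_2$ above $B$. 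Your remark that the large range ``is easy'' is therefore misleading: after the Ekedahl step the remaining primes $p>\xi_2$ are exactly the ones requiring the determinant method, and this is where the exponent $k>3d/4$ is generated, not in any ``medium-range argument'' of Browning. In short, the proposal is a correct heuristic roadmap, but it is missing the two concrete ingredients (global $n$-variable Ekedahl for $p^2\mid F$, and the coefficient-uniform global determinant method for $p^k\mid F$) that turn the roadmap into a proof.
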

Here the constant term is given by the limit of an absolutely convergent infinite product
\[C_{F,k}=\prod_p \left(1-\frac{\rho_F(p^k)}{p^{kn}}\right),\]
where
\[\rho_F(m) = \#\{(s_1, \cdots, s_n) \in (\bZ/m\bZ)^n : F(s_1, \cdots, s_n) \equiv 0 \pmod{m} \}.\]
Under our assumptions $C_{F,k}$ is positive. (The convergence is a well-known fact when $n=1$ and follows from Lemma \ref{rhoF est} for $n\geq 2$.) In particular, whenever $d \geq 5$, the quantity $N_{F,d-1}(B)$ will satisfy the expected asymptotic formula. \\ 

Following Browning \cite{B2}, we can also handle the case when we restrict the inputs to be primes. This extends an Erd\H{o}s conjecture for $(d-1)$-free values of one-variable polynomials at prime arguments to multi-variable polynomials with $d\geq 5$. We thus obtain the following theorem:

\begin{theorem} \label{MT2} Let $k \geq 2$ be a positive integer and let $F$ be a square-free polynomial with integer coefficients and degree $d \geq 2$ in $n$ variables, such that for all primes $p$, there exists an integer $n$-tuple $(m_1, \cdots, m_n)$ such that $p^k \nmid F(m_1, \cdots, m_n)$. Put
\[\N_{F,k}(B) = \# \{(p_1, \cdots, p_n) \in \bZ^n : |p_i| \leq B \text{ for } 1 \leq i \leq n, F(p_1, \cdots, p_n) \text{ is }k\text{-free}\} ,\]
where $p_i$ is prime for $1 \leq i \leq n$.
%Let $\N_{F,k}(B)$ denotes the size of the set 
%\[ \{(p_1, \cdots, p_n) \in \bZ^n : |p_i| \leq B, p_i \text{ is prime for } 1 \leq i \leq n, F(p_1, \cdots, p_n) \text{ is }k\text{-free}\}.\]
%\begin{align*}\N_{F,k}(B) = & &\\
% \# \{(p_1, \cdots, p_n) \in \bZ^n : |p_i| \leq B, p_i \text{ is prime for } 1 \leq i \leq n, F(p_1, \cdots, p_n) \text{ is }k\text{-free}\} &&.
%\end{align*}
Then there exists a positive number $C_{F,k}'$ such that the asymptotic relation
\[\N_{F,k}(B) \sim C_{F,k}' \frac{B^n}{(\log B)^n}\]
holds whenever $k \geq (3d+1)/4$. 
\end{theorem}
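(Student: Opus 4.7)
The plan is to follow the strategy used to prove Theorem~\ref{MT1}, replacing Browning's one-variable $k$-free asymptotic with his prime-restricted analog from \cite{B2}. First, by Möbius inversion,
\[
\N_{F,k}(B) = \sum_{1 \le m \ll B^{d/k}} \mu(m)\, A_m^*(B),
\]
where $A_m^*(B) := \#\{(p_1,\ldots,p_n) : p_i\text{ prime},\ |p_i|\le B,\ m^k \mid F(p_1,\ldots,p_n)\}$; the truncation is forced by $|F(\Bp)|\ll B^d$. I then split the sum over $m$ into three ranges: small ($m\le M_1$), middle ($M_1 < m \le M_2$), and large ($M_2 < m \ll B^{d/k}$), where $M_1$ and $M_2$ are parameters to be optimized.

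For the small range, I would expand the congruence $m^k \mid F(\Bp)$ into residue classes modulo $m^k$ and apply the prime number theorem in arithmetic progressions (Siegel--Walfisz, or Bombieri--Vinogradov averaged over $m$) to each variable $p_i$ restricted to be coprime to $m$. Summing over $m$ and completing the residue sum yields the main term, which assembles into the Euler product
\[
C_{F,k}' = \prod_p \left(1 - \frac{\rho_F^*(p^k)}{\phi(p^k)^n}\right),
\]
where $\rho_F^*(p^k)$ counts residue tuples modulo $p^k$ with every entry coprime to $p$ and $F\equiv 0$. The hypothesis on $F$ guarantees $C_{F,k}' > 0$.

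For the large range, I would fix $(p_2,\ldots,p_n)$ and study $G(x) := F(x,p_2,\ldots,p_n) \in \bZ[x]$, of degree at most $d$. Whenever $\deg G = d$ and $G$ satisfies the local condition at every prime, Browning's prime analog of his one-variable $k$-free theorem, available under the hypothesis $k\ge(3d+1)/4$, gives an asymptotic of the expected shape for primes $p_1$ with $G(p_1)$ being $k$-free. Summing over the outer $(n-1)$-tuples reproduces the main term from this direction. The exceptional fibers — those on which the leading coefficient in $x$ vanishes or the local condition at some prime fails — lie in a positive-codimension subset of the parameter box and are handled by an elementary count combined with Browning's uniform upper bound.

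The main obstacle lies in the middle range, where I would invoke the new sieve technique developed in the paper for Theorem~\ref{MT1}, adapted to prime-valued variables. The adaptation requires that the sieve's key cancellation — which depends sensitively on the geometry of the level sets of $F$ modulo $m^k$ — persist after restricting each variable to a prime residue; this is achieved by appealing to Siegel--Walfisz-type equidistribution in the accessible range for $m$, and naturally picks up the factor $(\log B)^{-n}$ compared with the integer setting. Verifying this uniformity in $m$, rather than any single step, is the delicate point; once it is in place, combining the three ranges and optimizing $M_1, M_2$ yields Theorem~\ref{MT2}.
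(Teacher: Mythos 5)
Your decomposition by size is in the right spirit, and your treatment of the small range (Siegel--Walfisz, assembling the Euler product $\prod_p(1-\rho^*_F(p^k)/\phi(p^k)^n)$) matches the paper's estimate of $\N_1(B)$. But the large range is where the proposal goes wrong. You want to fix $(p_2,\ldots,p_n)$, apply ``Browning's prime analog'' of the one-variable $k$-free asymptotic to $G(x)=F(x,p_2,\ldots,p_n)$, and ``reproduce the main term.'' That is not what the tail of your M\"obius sum requires: $\sum_{m>M_2}\mu(m)A^*_m(B)$ is an \emph{error term}, and you need a power-saving \emph{upper bound}, not an asymptotic. Moreover, even reinterpreted as a direct fibering of the whole problem, this route is circular (you would be invoking, for each fiber, essentially the theorem you are trying to generalize), and it requires uniformity in the coefficients of $G=F_{\Bm}$ and a careful assembly of fiberwise constants that you do not address. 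The paper avoids all of this with one simple but decisive observation: primes are a subset of integers, so $\N_3(B)\le N_3(B)$, and the primality condition can be discarded for the large range. One then fixes $\Bm=(m_2,\ldots,m_n)$ and applies the global affine determinant method (Lemma~\ref{browning}, via Theorem~\ref{affine global}) to $F_{\Bm}(x)=vz^k$, obtaining $N_3(B)=O_{d,n,\ep}(B^{n-1+2\sqrt{\eta(1-k\eta/d)}+\ep})$, which is $O(B^{n-\epsilon})$ precisely when $k>3d/4$. This uniform, power-saving upper bound is the step your proposal is missing.

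You also have the difficulty inverted. You single out the middle range as ``the main obstacle,'' requiring a delicate adaptation of the sieve with Siegel--Walfisz equidistribution. In fact the middle range is routine: the paper bounds $\N_2(B)$ by exactly the same Ekedahl geometric-sieve argument (Lemma~\ref{Ekedahl} applied to the codimension-two variety $V_{F,G}$ with $G=\partial F/\partial x_1$) used for $N_2(B)$, with $\xi_1$ replaced by $\xi_1'=(\log B)^{n^2k^2}$ and $\xi_2$ by $B\exp(-c\sqrt{\log B})$; no equidistribution in residue classes is needed there, because it is again only an upper bound and one may drop primality. The genuine content of the prime-input theorem in the range $k\ge(3d+1)/4$ is (i) the Siegel--Walfisz main term for $\N_1$ with the modified local densities $\rho^*_F$, and (ii) the determinant-method bound $\N_3(B)\le N_3(B)=O(B^{n-\epsilon})$. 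I recommend you rework the large-range argument along these lines; as written it does not close.
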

Here we have 
\[C'_{F,k}=\prod_p \left(1-\frac{\rho^*_F(p^k)}{\phi(p^k)^n}\right),\]
where 
\begin{eqnarray}\label{prime rho} \rho_F^\ast(m) &=& \# \{\Bs \in \bZ^n : |s_i| \leq m-1,\text{ for } 1 \leq i \leq n, \\
&&\gcd(s_i,s_j) = 1 \text{ for all } i < j , F(\Bs) \equiv 0 \pmod{m}\}\nonumber\end{eqnarray}
and $C_{F,k}'$ is again positive under our assumptions.\\

The proof of Theorems \ref{MT1} and \ref{MT2} will largely rely on the affine determinant method of Heath-Brown \cite{HB2}, which is the same tool used by Browning in \cite{B2}. The main innovation in this paper is using sieve methods to handle medium sized primes which contribute to $N_{F,k}(B)$ and $\N_{F,k}(B)$. %Resolving the contribution of the middle range would also allow us to provide the expected asymptotic formula for $N_{F,d-1}$ for the cases not covered in Theorem \ref{MT1}, namely $d=3,4$. As Hooley himself suggests in \cite{Hoo2} this is possible using his early method from \cite{hooley76}. More precisely, one can estimate the last range of primes contributing to $N_{F,d-1}(B)$ with the help of the Gallagher's larger sieve.   
%
%\begin{theorem}\label{MT3}Let $F$ be a square-free polynomial with integer coefficients and degree $d \geq 3$ in $n$ variables, such that for all primes $p$, there exists an integer $n$-tuple $(m_1, \cdots, m_n)$ such that $p^k \nmid F(m_1, \cdots, m_n)$. Then there exists a positive number $C_{F,d-1}$ such that the following asymptotic relation holds
%\[N_{F,d-1}(B) \sim C_{F,d-1} B^n\,.\] 
%
%\end{theorem} 
%
%Theorem \ref{MT3} is proved via a similar approach as in \cite{Hoo1}. This only produces an error term with small $\log$-power saving, which is not enough to resolve the prime input problem for the $(d-1)$-free values of a polynomial $F(x_1,\ldots,x_n)$ of degree $d=3,4$.

%%%%%%%%%%%%%%%%%%%%%%%%%%%%%%%%%%%%%%%%%%
\section{Preliminaries: the simple and Ekedahl's sieves}
%%%%%%%%%%%%%%%%%%%%%%%%%%%%%%%%%%%%%%%%%%

\subsection{Counting $k$-free values over integer inputs}

We shall find quantities $N_1(B)$, $N_2(B)$, $N_3(B)$ such that
\begin{equation} \label{simple sieve} N_1(B) - N_2(B) - N_3(B) \leq N_{F,k}(B) \leq N_1(B),
\end{equation}
and for which $N_2(B), N_3(B) = o(B^n)$. This is similar to the \emph{simple sieve} technique of Hooley. \\

Let $\xi_1 = \frac{1}{nk} \log B$ and $\xi_2 = B (\log B)^{1/2}$, and put
\begin{equation} \label{N1} N_1(B) =N_{1,k}(B)= \# \{(x_1, \cdots, x_n) \in \bZ^n : |x_i| \leq B, p^{k} | F(x_1, \cdots, x_n) \Rightarrow p > \xi_1 \}, \end{equation}

\begin{equation} \label{N2} N_2(B) =N_{2,k}(B)= \# \{(x_1, \cdots, x_n) \in \bZ^n : |x_i| \leq B, p^{k} | F(x_1, \cdots, x_n) \Rightarrow p > \xi_1, 
\end{equation} 
\[\exists \xi_1 < p \leq \xi_2 \text{ s.t. } p^2 | F(x_1, \cdots, x_n)\}\]
and
\begin{equation} \label{N3} N_3(B) = N_{3,k}(B)=\# \{(x_1, \cdots, x_n) \in \bZ^n : |x_i| \leq B, p^{k} | F(x_1, \cdots, x_n) \Rightarrow p > \xi_1, \end{equation}
\[p^2 \nmid F(x_1, \cdots, x_n) \text{ for } \xi_1 < p \leq \xi_2, \exists p > \xi_2 \text{ s.t. } p^{k} | F(x_1, \cdots, x_n)\}.\]
Then it is clear that (\ref{simple sieve}) holds. \\

We first show that $N_1(X)$ gives us a term with the expected order of magnitude. We require an estimate for $\rho_F(p^k)$ given by the following lemma:
\begin{lemma} \label{rhoF est} Let $F$ be a square-free polynomial in $n$ variables with integer coefficients, and such that for all primes $p$, $p^k$ does not divide $F$ identically. Then for any $k\geq 2$ we have $\rho_F(p^k) = O_F\left(p^{nk-2}\right)$. 
\end{lemma}

\begin{proof} For a positive integer $m$, let $S_m(F)$ denote the set of points (over $\bC$, say) of $F$ which have multiplicity $m$. It is clear that $m \leq \deg F$. By our hypothesis on $F$ it follows that for $m \geq 2$ $S_m(F)$ is not Zariski dense in the variety $X_F: F = 0$, hence as a subvariety of $X_F$ it has codimension at least one. For all but finitely many primes $p$, we have that any smooth point $\Bx \in X_F \cap \bZ^n$ over $\bC$ reduces to a smooth point over $\bF_p$; moreover, it follows from standard arguments that smooth points over $\bF_p$ contribute at most $C p^{n(k-1)}$ to $\rho_F(p^k)$. \\

We now estimate the contribution from $S_m(F)$ for $m \geq 2$. For any such point $\Bx_0 \in X_F \cap \bZ^n$, we can take a Taylor expansion around $\Bx_0$ and see that $p^k | F(\Bx)$ whenever $\Bx \equiv \Bx_0 \pmod{p^{\lceil k/m \rceil}}$. Thus, working $(\bZ/p^k \bZ)^n$ and taking $e = \lceil k/m \rceil$, we see that each $\Bx_0 \pmod{p^e}$ which is the reduction of a point in $S_m(F)$ mod $p^e$ contributes $O(p^{(k-e)n}) = O(p^{kn - en})$ to $\rho_F(p^k)$. \\

Now, the number of such points mod $p^e$ can be counted as follows. First begin with the set $S_m(F)(p)$, the set of points of $X_F \pmod{p}$ which has multiplicity $m$. There are $O(p^{n-2})$ such points, since $S_m(F)(p)$ has codimension one in $X_F$ by hypothesis. For each such point there are at most $O(p^{(e-1)n})$ points in $(\bZ/p^e \bZ)^n$ which lies above it. Thus there are $O(p^{(e-1)n + n - 2})$ such points. Thus there are
\[O\left(p^{kn - en + (e-1)n + n-2}\right) = O\left(p^{kn -2}\right)\]
such points, as desired.  
\end{proof}

Now put 
\[N(B ; h) = \# \{\Bx \in \bZ^n: |x_i| \leq B, h^k | F(x_1, \cdots, x_n)\}.\]
By standard properties of the M\"obius function $\mu$ we then see that
\begin{align} \label{N1 mob} N_1(B) & = \sum_{\substack{ h \in \bN \\ p | h \Rightarrow p \leq \xi_1}} \mu(h) N(B; h) \\
& = \sum_{\substack{h \in \bN \\ p | h \Rightarrow p \leq \xi_1}} \mu(h) \rho_F(h^k) \left\{ \frac{B^n}{h^{nk}} + O \left(\frac{B^{n-1}}{h^{k(n-1)}} + 1 \right) \right \} \notag.
\end{align}
Since the summand vanishes when $h$ is not square-free, we may assume that $h$ is in fact square-free. Hence
\[h \leq \prod_{p \leq \xi_1} p = \exp \left(\sum_{p \leq \xi_1} \log p \right) \leq e^{2 \xi_1}.\]
By Lemma \ref{rhoF est}, we then see that
\begin{equation*}  N_1(B) = B^n \prod_{p \leq \xi_1} \left(1 - \frac{\rho_F(p^k)}{p^{nk}}\right) + O \left(\sum_{h \leq e^{2 \xi_1}} h^{kn - 2 + \ep} \left(B^{n-1} h^{-k(n-1)} + 1 \right) \right). \end{equation*}
The big-$O$ term can be evaluated to be
\[O_\ep \left(B^\ep \left(B^{n-1 + \frac{k-1}{nk}} + B^{\frac{kn-1}{nk} } \right)\right),\]
and we see that this is $O(B^{n - \delta})$ for some $\delta > 0$. Thus we have
\begin{equation}\label{N1 est} N_1(B)=C_{F,k}B^n + O \left(B^n \xi_1^{-1} \right) +O(B^{n-\delta}).\end{equation}

Next we give an estimate for $N_2(B)$. 
\begin{lemma} \label{N2 est} Let $N_2(B)$ be as in (\ref{N2}). Then
\[N_2(B) = O_d \left(\frac{B^n}{\xi_1} + \frac{B^{n-1} \xi_2}{\log \xi_2}\right).\]
\end{lemma}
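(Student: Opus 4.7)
I bound $N_2(B) \le \sum_{\xi_1 < p \le \xi_2} T(p)$, where $T(p) := \#\{\Bx \in [-B,B]^n \cap \bZ^n : p^2 \mid F(\Bx)\}$, and split this sum at $p = \sqrt{B}$.

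For the small-prime range $\xi_1 < p \le \sqrt{B}$, the standard residue-class bound gives $T(p) \le \rho_F(p^2)(2B/p^2+1)^n \ll \rho_F(p^2)(B/p^2)^n$, and inserting $\rho_F(p^2) = O(p^{2n-2})$ from~\eqref{rho F bd} yields $T(p) \ll B^n/p^2$. Summation via $\sum_{p > \xi_1} p^{-2} \ll 1/\xi_1$ then produces the first error term $O(B^n/\xi_1)$.

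For the medium-prime range $\sqrt{B} < p \le \xi_2$, the residue count is too crude. I aim to establish the per-prime bound $T(p) \ll B^{n-1}$, which upon summation gives $O(B^{n-1}\pi(\xi_2)) = O(B^{n-1}\xi_2/\log\xi_2)$. To prove this, I fix $\Bx' = (x_1, \ldots, x_{n-1}) \in [-B,B]^{n-1}$ and consider the univariate specialization $g_{\Bx'}(y) := F(\Bx', y) \in \bZ[y]$ (without loss of generality $F$ depends on $x_n$). Let $a(\Bx')$ and $D(\Bx')$ denote the leading coefficient and discriminant of $g_{\Bx'}$ in $y$. For ``very generic'' $\Bx'$ with $p \nmid a(\Bx')D(\Bx')$, Hensel's lemma yields $\rho_{g_{\Bx'}}(p^2) \le d$; combined with the fact that $p^2 > B$ forces each residue class modulo $p^2$ to intersect $[-B,B]$ in at most one point, this produces at most $d$ values of $y$ per $\Bx'$, totaling $O(B^{n-1})$ per prime.

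The main technical obstacle is bounding the degenerate contributions. Those $\Bx'$ with $a(\Bx')D(\Bx') = 0$ in $\bZ$ lie on a subvariety of $\bA^{n-1}$ of dimension at most $n-2$, and contribute $O(B^{n-2}) \cdot (2B+1) = O(B^{n-1})$ per prime by the trivial bound. For the remaining case---$\Bx'$ generic but $p \mid a(\Bx')D(\Bx')$---where $\rho_{g_{\Bx'}}(p^2)$ may jump to $O(p)$, one interchanges the order of summation: since the non-zero integer $a(\Bx')D(\Bx')$ is polynomially bounded in $B$, for each fixed $\Bx'$ only $O(1)$ primes $p > \sqrt{B}$ can divide it (their product is $\le B^{O(1)}$ while each exceeds $\sqrt{B}$). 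A sharpened Hensel analysis of $\rho_{g_{\Bx'}}(p^2)$ in the singular case then shows that the resulting contribution is absorbed into $O(B^{n-1}\xi_2/\log\xi_2)$, completing the proof.
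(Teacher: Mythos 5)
The proposal shares the paper's basic strategy of slicing in $n-1$ variables and using Hensel lifting, but diverges in how it controls the singular locus, and that divergence is where a real gap opens up.

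The paper never reasons slice-by-slice about discriminants and leading coefficients. Instead it partitions the solution set \emph{point-by-point}: a point $\Bx$ with $p^2\mid F(\Bx)$ either satisfies $p\nmid\partial F/\partial x_1(\Bx)$ — in which case Hensel's lemma gives at most $d$ lifts per slice, uniformly, yielding $N_p^\dagger(B)\ll_d B^{n-1}(B/p^2+1)$ — or $p\mid\partial F/\partial x_1(\Bx)$, in which case $\Bx\bmod p$ lies on the codimension-two variety $V_{F,\partial_1 F}$, and Ekedahl's geometric sieve (Lemma~\ref{Ekedahl}) disposes of \emph{all} primes $p>\xi_1$ at once with the bound $O(B^n/(\xi_1\log\xi_1)+B^{n-1})$. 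Your plan replaces that geometric-sieve input with a discriminant/leading-coefficient classification of the slices themselves, and that is where the argument breaks.

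Concretely, the problem is the subcase where $g_{\Bx'}(y)=F(\Bx',y)$ vanishes \emph{identically} modulo $p^2$, i.e.\ $p^2$ divides every coefficient $c_j(\Bx')$. You write that $\rho_{g_{\Bx'}}(p^2)$ ``may jump to $O(p)$,'' but in fact it can be $p^2$, and then the fiber contributes $2B+1$ values of $y$, not $O_d(B/p+1)$. The interchange-of-summation step only bounds the \emph{number} of bad $(\Bx',p)$ pairs by $O(B^{n-1})$; with each such pair carrying weight $\Theta(B)$ the total is $\Theta(B^n)$, which does not fit inside $O(B^{n-1}\xi_2/\log\xi_2)$. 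This is not a purely hypothetical worry: polynomials like $F=x_1^2 H(x_1,\ldots,x_n)$ are admissible under the theorem's hypothesis for $k=2$, yet for them the per-prime bound $T(p)\ll B^{n-1}$ that your plan aims at is simply false, because $p^2\mid F(\Bx)$ whenever $p\mid x_1$. No ``sharpened Hensel analysis'' inside a fixed slice can recover this — you need a genuinely multidimensional tool such as Ekedahl's lemma (or, equivalently, the decomposition $N_2\le\sum_p N_p^\dagger+\sum_p N_p^\ast$ along the partial derivative) to bound how often the reduction mod $p$ is singular. In the sub-threshold cases (where $g_{\Bx'}\not\equiv 0\bmod p^2$) your estimate $O_d(B/p+1)\ll\sqrt{B}$ per bad pair is correct and the resulting $O(B^{n-1/2})$ is harmless, so the rest of the outline is sound; the missing ingredient is precisely the geometric sieve.
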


To prove this lemma, we will need the following result due to Ekedahl; the formulation below is due to Bhargava and Shankar \cite{BhaSha}.  

\begin{lemma}[Ekedahl] \label{Ekedahl} Let $\mathcal{B}$ be a compact region in $\bR^n$ with positive measure, and let $Y$ be any closed subscheme of $\bA_\bZ^n$ of co-dimension $k \geq 2$. Let $r$ and $M$ be positive real numbers. Then we have
\[\#\{v \in r\mathcal{B} \cap \bZ^n : v \pmod{p} \in Y(\bF_p) \text{ for some } p > M\} = O \left(\frac{r^n}{M^{k-1} \log M} + r^{n-k+1} \right).\]
\end{lemma}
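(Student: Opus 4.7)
The strategy is the classical one due to Ekedahl: combine a Lang--Weil type bound on $|Y(\bF_p)|$ with elementary box counting, treating two complementary prime ranges separately. After covering $\mathcal{B}$ by a finite union of translates of the unit cube, at the cost of a multiplicative constant depending only on $\mathcal{B}$, I reduce to the case $r\mathcal{B}=[-r,r]^n$. Since $Y$ has codimension $\geq k$, I would pick defining polynomials for $Y$ and, by generic $\bZ$-linear combinations, obtain polynomials $g_1,\ldots,g_k\in\bZ[x_1,\ldots,x_n]$ of bounded degree with $Y\subseteq Z:=V(g_1,\ldots,g_k)$ and $\operatorname{codim} Z=k$. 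The Lang--Weil estimate then yields $|Y(\bF_p)|\leq|Z(\bF_p)|=O(p^{n-k})$, with implied constant depending only on the degrees of the $g_i$.

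For primes in the range $M<p\leq r$, each residue class modulo $p$ in the box $[-r,r]^n$ contains $O((r/p)^n)$ lattice points. Combined with the Lang--Weil bound,
\[\#\{v\in[-r,r]^n\cap\bZ^n:v\bmod p\in Y(\bF_p)\}\ll p^{n-k}(r/p)^n=r^n/p^k.\]
Summing via $\sum_{p>M}p^{-k}\ll M^{-(k-1)}/\log M$, which follows by partial summation from the prime number theorem and uses $k\geq 2$, produces $O(r^n/(M^{k-1}\log M))$, the first term in the claimed bound.

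For primes $p>r$, I would pass to the intermediate subvariety $W:=V(g_1,\ldots,g_{k-1})$ of codimension $k-1$ and use the standard lattice-point bound $|W(\bZ)\cap[-r,r]^n|=O(r^{n-k+1})$, proved by iterated fiber projections with constants depending on the degrees of the $g_i$. If $v$ is counted then $p\mid g_i(v)$ for each $i$; I would split into the cases (a) $v\in W(\bZ)$, already absorbed into the above bound, and (b) $v\notin W(\bZ)$, so some $g_i(v)\neq 0$. In case (b) a resultant argument, using that two generically-chosen coprime polynomials $g_i$ and $g_k$ must share a prime factor $>r$, produces a divisibility condition on a nonzero resultant polynomial in fewer variables; iterating this observation confines $v$ to a locus whose integer points again number $O(r^{n-k+1})$, supplying the second term.

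The principal obstacle is the execution of step (b): one must choose the generators $g_1,\ldots,g_k$ so that they form a regular sequence and so that the pairwise resultants remain nonzero after any coordinate changes, controlling their degrees throughout. A cleaner alternative observes that once $p$ exceeds a constant multiple of $r^D$ with $D=\max_i\deg g_i$, the inequality $|g_i(v)|<p$ forces $g_i(v)=0$ exactly and hence $v\in Y(\bZ)$, a codim-$k$ set with only $O(r^{n-k})$ integer points in $[-r,r]^n$. It is the residual intermediate range $r<p\lesssim r^D$ that genuinely requires the coprime/resultant analysis, and this is where the technical bulk of the argument is concentrated.
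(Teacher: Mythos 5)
The paper does not actually prove this lemma: it is stated and used as a black box, with the formulation attributed to Bhargava--Shankar \cite{BhaSha} and the result to Ekedahl. So there is no internal proof for your argument to be compared against; I can only assess your proposal on its own merits.

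Your treatment of the range $M < p \le r$ is correct and matches the standard argument: the Lang--Weil bound $\#Y(\bF_p) = O(p^{n-k})$ together with the $O((r/p)^n)$ count of lattice points per residue class mod $p$, followed by the elementary estimate $\sum_{p>M} p^{-k} \ll M^{-(k-1)}/\log M$ (valid for $k \ge 2$), gives the first term. The omission of the $+O(1)$ per residue class is harmless here since $p \le r$, although if you keep it you also pick up the $O(r^{n-k+1}/\log r)$ term cleanly from $\sum_{M<p<r} p^{n-k}$.

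The range $p > r$ is where your proposal has a genuine gap, and you essentially say so yourself. Your case (a) is fine: $W = V(g_1,\ldots,g_{k-1})$ has codimension $k-1$, so $\#(W(\bZ) \cap [-r,r]^n) = O(r^{n-k+1})$, assuming you prove the routine (but not completely trivial) fact that a codimension-$m$ subvariety has $O(r^{n-m})$ integer points in a box of side $r$, uniformly via Noether normalization. But case (b), the ``coprime/resultant analysis,'' is only gestured at. The difficulty is quantitative: if you take the resultant $R = \operatorname{Res}_{x_1}(g_i, g_k)$, then $p \mid R(v_2,\ldots,v_n)$, and while $R(v_2,\ldots,v_n) \ne 0$ pins down $p$ to $O(1)$ values, the degenerate locus $R(v_2,\ldots,v_n) = 0$ is only a hypersurface in $\bA^{n-1}$, contributing $O(r^{n-2}) \cdot O(r) = O(r^{n-1})$ points with $v_1$ free. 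For $k = 2$ this matches the claimed second term, but for $k \ge 3$ it is too large, and it is not clear how the ``iteration'' you invoke reduces this to $O(r^{n-k+1})$; you need to thread the other generators $g_1,\ldots,g_{k-1}$ through the resultant locus, which requires a careful Noether-normalization setup (monic elimination polynomials in each of the last $k$ variables over the first $n-k$) that you do not carry out. Even in the nondegenerate subcase you still need to show that, given $(v_2,\ldots,v_n)$ and the $O(1)$ admissible primes $p$, the fiber of $Y(\bF_p)$ over $(v_2,\ldots,v_n) \bmod p$ has $O(1)$ points with $O(1)$ lifts, which again requires the generic finiteness of the projection; nothing guarantees this for the raw coordinate projection without a preliminary linear change of variables. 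For the purposes of this paper the lemma is only invoked with $k=2$ (for $Y = V_{F,G}$), so a complete write-up of the $k=2$ case would suffice for the application, but as a proof of the stated lemma for general $k \ge 2$ the argument is not finished.
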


\begin{proof}[Proof of Lemma \ref{N2 est}] We fix the variables $\Bx = (x_2, \cdots, x_n)$, and for each such choice, we put
\[F_\Bx(x) = F(x, x_2, \cdots, x_n).\]
Define the function, for a polynomial $f$ in a single variable $x$, by
\[\rho_f(m) = \#\{s \in \bZ/m\bZ : f(s) \equiv 0 \pmod{m}\}.\]
It is clear from the Chinese Remainder Theorem that $\rho_f$ is multiplicative. Observe that $\rho_f(p) \leq d$ if $p$ does not divide all coefficients of $f$ and in this case the bound is independent of $p$. Moreover, if $p \nmid \Delta(f)$, then for any positive integer $k$ we have $\rho_f(p^k) \leq d$. We now use Lemma \ref{Ekedahl} as follows. Let $G(x_1, \cdots, x_n) = \frac{\partial F}{\partial x_1} (x_1, \cdots, x_n)$. Define the variety $V_{F,G}$ to be 
\begin{equation} \label{VFG} V_{F,G} = \{(x_1, \cdots, x_n) \in \bC^n : F(x_1, \cdots, x_n) = G(x_1, \cdots, x_n) = 0\}.
\end{equation}
Observe that $V_{F,G}$ is of co-dimension two and is defined over $\bZ$. Put 
\[\N_p^\ast(B) = \{(x_1, \cdots, x_n) \in \bZ^n : |x_i| \leq B, (x_1, \cdots, x_n) \pmod{p} \in V_{F,G}(\bF_p)\}.\]
It follows from Lemma \ref{Ekedahl} that
\[\# \bigcup_{p > \xi_1} \N_p^\ast(B) = O \left(\frac{B^n}{\xi_1 \log \xi_1} + B^{n-1} \right),\]
which is an acceptable error term. \\ \\
Now put
\[N_p^\dagger(B) = \#\{(x_1, \cdots, x_n) \in \bZ^n : |x_i| \leq B, p^2 | F(x_1, \cdots, x_n), p \nmid G(x_1, \cdots, x_n)\}.\]
It then follows that
\[N_2(B) \leq \sum_{\xi_1 < p \leq \xi_2} N_p^\dagger(B) + \sum_{\xi_1 < p \leq \xi_2} N_p^\ast(B),\]
and since we have already estimated the second sum, it suffices to estimate the former. For fixed $(x_2, \cdots, x_n)$, the solutions to $f(x) = F(x, x_2, \cdots, x_n) \equiv 0 \pmod{p^2}$ contributing to $N_p^\dagger(X)$ must satisfy $p \nmid \Delta(f)$; in particular, the number of solutions in $\bZ/p^2\bZ$ is at most $d$.  We then have that
\[\sum_{\xi_1 < p \leq \xi_2} N_p^\dagger(B) = O_d \left(B^{n-1} \sum_{\xi_1 < p \leq \xi_2} \left(\frac{B}{p^2} + 1 \right) \right) = O_d \left(\frac{B^n}{\xi_1} + \frac{B^{n-1} \xi_2}{\log \xi_2}\right).\]
\end{proof}

\subsection{Counting $k$-free values over prime inputs} We shall seek an analogue to (\ref{simple sieve}) for the case of prime inputs. Put $\N_1(B)$, $\N_2(B)$, $\N_3(B)$ for the analogues to (\ref{N1}), (\ref{N2}), and (\ref{N3}) in the case of prime inputs. We begin with an estimate for $\N_1(B)$. Since we are interested in prime inputs, we should modify the function $\rho_F(m)$. In particular, the only way for $\Bm$ with all prime coordinates to have two coordinates to share a common factor is if they are equal, and thus $\Bm$ lies on one of the hyperplanes in $\bR^n$ defined by $x_i = x_j$ for some $i < j$. These points are negligible in the box $[-B,B]^n \cap \bZ^n$, so we may assume that $\gcd(m_i, m_j) = 1$ for all $i < j$ and consider the modified quantity \eqref{prime rho}.

%We shall thus put
%\begin{equation} \label{prime rho} \rho_F^\ast(m) = \# \{\Bm \in \bZ^n : |m_i| \leq m-1,\text{ for } 1 \leq i \leq n, \end{equation}
%\[\gcd(m_i,m_j) = 1 \text{ for all } i < j , F(\Bm) \equiv 0 \pmod{m}\}.\]
It is immediate that $\rho_F^\ast(p^k) = O_{d,n} \left(p^{kn-2)} \right)$. Moreover, the number of possible elements in $(\bZ/p^k \bZ)^n$ such that no coordinate is divisible by $p$ is $\phi(p^k)^n = (p^k - p^{k-1})^n$. Similar to (\ref{N1 mob}), we find that
\[\N_1(B) = \sum_{\substack{p \in \bN \\ p | h \Rightarrow p > \xi_1'}} \mu(h) \rho_F^\ast(p^k) \left(\frac{B^n}{\phi(p^k)^n (\log B)^n} + O_c\left(\frac{B^{n}}{\phi(p^k)^{n-1} (\log B)^{n-1} \exp(c \sqrt{\log B})}  \right) \right), \]
by the Siegel-Walfisz theorem. We thus find that
\[\N_1(B) = \frac{B^n}{(\log B)^n} \prod_{p \leq \xi_1'} \left(1 - \frac{\rho_F^\ast(p^k)}{\phi(p^k)^n} \right) + O_c \left(\frac{B^n (\xi_1')^{nk}}{(\log B)^{n-1} \exp(c\sqrt{\log B})} \right). \]
Since $\exp(c \sqrt{\log B})$ is eventually larger than any power of $\log B$, we may take $\xi_1' = (\log B)^{n^2 k^2}$ say. We then see that $\N_1(B)$ approximates our main term, bearing in mind that in a similar way we have $C'_{F,k}>0$. Now the proof of the analogue of Lemma \ref{N2 est} for $\N_2(B)$ carries through as before, with $\xi_1$ replaced with $\xi_1'$ and $\xi_2$ replaced with $B \exp(-c \sqrt{\log B})$.  The remaining quantity $\N_3(B)$ will be estimated by the upper bound of $N_3(B)$, which is obtained at the end of the paper in subsection \ref{subsect N3}. \\ \\
In the next section, we shall derive a version of the global determinant method which applies to affine varieties, refining Heath-Brown's Theorem 15 in \cite{HB2}. It is well-known that the global determinant method produces estimates which are uniform in the coefficients of the polynomials involved and produces power-saving error terms. Thus, the proof of Theorems \ref{MT1} and \ref{MT2} in the case $d > 4$ can be carried out simultaneously. 

\section{Global affine determinant method}

In \cite{HB1}, Heath-Brown introduced a novel technique which is widely applicable to the subject of counting rational points on algebraic varieties, which is now known as the $p$-adic determinant method. It is a generalization of the original determinant method of Bombieri and Pila, which is based on real analytic arguments. This was later refined by Salberger in \cite{S2}. Salberger named his refinement the \emph{global} determinant method, in reference to the fact that it uses multiple primes simultaneously. \\ \\
In \cite{HB2}, Heath-Brown gave a different refinement of his $p$-adic determinant method in \cite{HB1}. Originally the $p$-adic determinant method could only treat projective hypersurfaces, but in \cite{HB2} Heath-Brown showed that an analogous version exists for affine varieties. Browning combined Salberger's global determinant method with the affine method of Heath-Brown for surfaces in \cite{B2}, in application to power-free values of polynomials in one or two variables. \\

We now give a complete statement of the global determinant method in the affine setting, recovering the generality given by Salberger in \cite{S2} and Xiao in \cite{X1} (in the setting of weighted projective spaces). \\ 

For positive numbers $B_1, \cdots, B_n$ all exceeding one, put
\[S(F; B_1, \cdots, B_n) = \{\Bx \in \bZ^n : |x_i| \leq B_i, F(\Bx) = 0\}.\]
Put $X$ for the hypersurface in $\bA^n$ defined by $F$. For a prime $p$ and a point $P \in X(\bF_p)$, put
\[S_p(F; B_1, \cdots, B_n; P) = \{\Bx \in S(F; B_1, \cdots, B_n) : \Bx \equiv P \pmod{p}\}.\]
For a vector of non-negative integers $\mathbf{e} = (e_1, \cdots, e_n)$, put $\Bx^{\mathbf{e}} = x_1^{e_1} \cdots x_n^{e_n}$. Let $\E$ be a finite set of vectors in $\bZ^n$ with non-negative entries. Put $s_P = \# S_p(F; \BB; P)$ and $E = \# \E$. Consider the $s_P \times E$ matrix $\M$ whose $ij$-th entry is the $i$-th monomial in $\E$ evaluated at the $j$-th element of $S_p(F; \Bx; P)$. \\ \\
We pick the same exponent set $\E$ as Heath-Brown in \cite{HB2}. In particular, write
\[F(x_1, \cdots, x_n) = \sum_{\mathbf{f}} a_{\mathbf{f}} \Bx^{\mathbf{f}},\]
and let $\P(F)$ be the Newton polyhedron of $F$. Put
\[T = \max_{\Bf \in \P(F)} \BB^{\Bf},\]
where $\BB = (B_1, \cdots, B_n)$ is a vector in $\bR^n$ with $B_i \geq 1$ for $1 \leq i \leq n$. Then there is at least one element $\Bm \in \P(F)$ such that $T = \BB^\Bm$. We pick a parameter $\lambda > 0$ and put $\Y = \lambda \log T$. We then define our exponent set $\E$ to be:
\begin{equation} \label{exponent set} \E = \left\{\Be \in \bZ^n : e_i \geq 0 \text{ for } 1 \leq i \leq n, \sum_{i=1}^n e_i \log B_i \leq \Y, e_i < m_i \text{ for at least one } i \right \}
\end{equation}
We shall abuse notation and also refer to $\E$ as a set of monomials. It was shown by Heath-Brown in \cite{HB2} that all non-trivial linear combinations of monomials whose exponent vectors lie in $\E$ leads to a polynomial which is not divisible by $F$. \\

Put
\begin{equation} \label{W} W = \exp \left( \left( \frac{\prod_{1 \leq i \leq n} \log B_i }{\log T} \right)^{1/(n-1)} \right).\end{equation}

We can now state the main result of this section, which is the global determinant method analogue of Heath-Brown's Theorem 15 in \cite{HB2}: 
\begin{theorem} \label{affine global} Let $\BB = (B_1, \cdots, B_n) \in \bR^{n}$ be a vector of positive numbers of size at least $1$. Let $X$ be a hypersurface defined over $\bZ$ in $\bA^n$ which is irreducible over $\bQ$ and defined by an irreducible polynomial $F$. Let $\UUU$ be a finite set of primes and put
\[\Q = \prod_{p \in \UUU}p. \]
For each prime $p \in \UUU$, let $P_p$ be a non-singular point in $X(\bF_p)$ and put
\[\UU = \{P_p : p \in \UUU\}.\]
Let $\E$ be as given in (\ref{exponent set}). Then
\begin{itemize}
\item[(a)] Let $\ep > 0$. If 
\[WT^\ep < \Q \leq WT^{2\ep}, \]
then there is a hypersurface $Y(\UU)$ containing $S(F; \BB; \UU)$, not containing $X$ and defined by a primitive polynomial $G$, whose degree is $O_{d,n,\ep}(1)$ and whose height is at most $O_{d,n,\ep}(\log T)$. 
\item[(b)] If $X$ is geometrically integral, then there exists a hypersurface $Y(\UU)$ containing $S(F; \BB, \UU)$, not containing $X$ and defined by a primitive polynomial $G$ whose degree satisfies
\[\deg G = O_{d,n} \left( (1 + \Q^{-1} W) \log T\Q\right).\]
\end{itemize}
\end{theorem}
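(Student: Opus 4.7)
The plan is to implement the global determinant method in its affine incarnation, following the $p$-adic argument of Heath-Brown (Theorem 15 of \cite{HB2}) and Browning's treatment in \cite{B2}, but replacing the single-prime framework by Salberger's multi-prime setup from \cite{S2}. Form the matrix $\M$ whose rows are indexed by the points $\Bx\in S(F;\BB;\UU)$ and whose columns are indexed by the monomials $\Bx^{\Be}$ for $\Be\in\E$. A primitive polynomial $G=\sum_{\Be\in\E}c_{\Be}\Bx^{\Be}$ vanishing on $S(F;\BB;\UU)$ is the same thing as a non-zero integer vector $(c_\Be)$ in the right kernel of $\M$, so it suffices to show that every $E\times E$ minor of $\M$ vanishes. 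That $G$ then does not vanish on $X$ is built into the choice of $\E$ via the observation of Heath-Brown recalled in the paragraph after \eqref{exponent set}.

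For each prime $p\in\UUU$ and each non-singular point $P_p$, I would use the non-singularity to locally parameterise $X$ near $P_p$ and, via a Taylor expansion around $P_p$ combined with the cut-off $\sum e_i\log B_i\leq\Y$ in \eqref{exponent set}, deduce that for every $j\geq 1$ the reductions modulo $p^{j}$ of the rows of $\M$ indexed by $S_p(F;\BB;P_p)$ span a sub-module of $(\bZ/p^{j}\bZ)^E$ of rank at most some $N_j=N_j(n,d,\E,\BB)$ independent of $p$. A standard Smith normal form argument then shows that each $E\times E$ minor formed from those rows is divisible by $p^{h_p}$, with $h_p=\sum_{j\geq 1}(E-N_j)_{+}$. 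Multiplying these divisibilities over all $p\in\UUU$, every $E\times E$ minor of $\M$ is divisible by $\Q^{h}$ for an explicit exponent $h=h(\E,\BB)$.

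Combining this divisibility with Hadamard's inequality, which bounds every $E\times E$ minor by $T^{\sigma}$ for some $\sigma=\sigma(\E,\BB)$, yields a vanishing criterion: as soon as $\Q^{h}>T^{\sigma}$, all $E\times E$ minors must vanish and one extracts the required linear relation by Cramer's rule. The quantity $W$ in \eqref{W} is defined precisely so that the balance $\Q^{h}\asymp T^{\sigma}$ is struck at $\Q\asymp W$, and the slack factor $T^{O(\ep)}$ in part~(a) absorbs the residual inequalities. Choosing $\lambda$ as a function of $\ep$ keeps $E=O_{d,n,\ep}(1)$, so $\deg G=O_{d,n,\ep}(1)$, and the height bound $O(\log T)$ follows by tracking the sizes of the cofactors used in Cramer's rule applied to a matrix whose entries are bounded by powers of $T$.

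For part~(b), the hypothesis that $X$ is geometrically integral permits the Lang--Weil estimate $\#X(\bF_p)\ll p^{n-1}$ to be invoked uniformly in $p$ (with implied constant depending only on $d,n$), which gives tight control on the distribution of points across $\UUU$ and refines the divisibility analysis in the regime where $\Q$ may be much larger than $W$. Balancing divisibility against Hadamard in this wider range still produces a non-trivial relation, but now with $\deg G$ of order $(1+\Q^{-1}W)\log T\Q$ rather than bounded, and the height control is lost. The main obstacle I anticipate is the combinatorial bookkeeping needed to evaluate $h$ and $\sigma$ explicitly in terms of $\E$, $\BB$ and $\Y$, so that the threshold in $\Q$ comes out to exactly $W$ as in \eqref{W}; once these computations are carried out as in \cite{HB2} and \cite{S2}, the rest of the proof proceeds along now-classical lines.
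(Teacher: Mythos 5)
Your proposal recapitulates the global (multi-prime) $p$-adic determinant method of Heath-Brown, Salberger, and Xiao in the affine setting, which is exactly the route the paper takes — indeed the paper's own ``proof'' of this theorem is a two-sentence deferral to Salberger \cite{S2} and Xiao \cite{X1}, noting that their arguments already reduce to the affine case. One small imprecision worth flagging: the regime in part~(b) where the degree of $G$ is forced to grow and geometric integrality earns its keep is $\Q \ll W$, where the factor $(1+\Q^{-1}W)$ in the degree bound dominates, rather than $\Q \gg W$ as your Lang--Weil heuristic suggests.
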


Theorem \ref{affine global} can be proved using the same arguments as in \cite{S2} or \cite{X1}. The key insight is that the proofs in \cite{S2} and \cite{X1} producing large divisors of the discriminants essentially reduces to the affine case, so they remain valid if one starts with the affine case. \\ 

We will now need the following lemma which is a consequence of Theorem \ref{affine global}, and was stated without proof in \cite{B2}: 

\begin{lemma} Let $X$ be a geometrically integral affine surface of degree $d$ in $\bA^3$ defined by an integral polynomial $F$, and let $\BB = (B_1, B_2, B_3)$ be a vector of positive numbers exceeding one. Then for any $\ep > 0$ there exists a collection of integral polynomials $\Gamma$, defined over $\bZ$, such that
\begin{itemize}
\item[(a)] $\# \Gamma = O_{d,\ep} \left(W^{1 + \ep}\right),$
\item[(b)] Each polynomial $G \in \Gamma$ is co-prime with $F$, 
\item[(c)] The number of points in $X(\bZ; \BB)$ not lying on $\{G = 0 \}, G \in \Gamma$ is at most $O_{d,\ep} \left(W^{2 + \ep}\right)$, and
\item[(d)] Each polynomial $G \in \Gamma$ has degree $O_{d,\ep}(1)$.
\end{itemize}
\end{lemma}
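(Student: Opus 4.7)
The plan is to apply Theorem \ref{affine global} with a carefully chosen set of primes and then bound the resulting data via counting arguments. Fix $\ep > 0$ and, by Chebyshev's estimate on the distribution of primes, select a set $\UUU$ of primes in a window $[M, 2M]$ with $M$ a suitable power of $\log T$, chosen so that $\Q = \prod_{p \in \UUU} p \in (W T^\ep, W T^{2\ep}]$. For each tuple $\UU = (P_p)_{p \in \UUU}$ of non-singular $\bF_p$-points on $X$ arising as the reduction of some integer point of $X(\bZ; \BB)$, Theorem \ref{affine global}(a) furnishes a primitive polynomial $G_\UU$ of degree $O_{d,\ep}(1)$, co-prime to $F$, whose zero locus contains the fiber $S(F; \BB; \UU)$. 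We take $\Gamma = \{G_\UU\}$, and properties (b) and (d) are immediate from Theorem \ref{affine global}(a).

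For property (c), any exceptional point $\Bx \in X(\bZ; \BB)$ not lying on any $G \in \Gamma$ must have $\Bx \bmod p$ singular on $X/\bF_p$ for some $p \in \UUU$. Since $X$ is geometrically integral in $\bA^3$, its singular subscheme has codimension at least two, so Ekedahl's Lemma \ref{Ekedahl} applied with $k = 2$ and $r \asymp \max_i B_i$ bounds their number by $O(r^3/(M \log M) + r^2)$. Choosing $M$ as an appropriate power of $\log T$ absorbs this into $O(W^{2+\ep})$.

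The crux of the argument is property (a), the bound $|\Gamma| = O(W^{1+\ep})$. A naive count only gives $|\Gamma| \leq \prod_{p \in \UUU} |X(\bF_p)| = O(\Q^2) = O(W^2)$, too weak by one full power of $W$. To save this factor one iterates the determinant method: an auxiliary application of Theorem \ref{affine global}(b), with a coarser prime parameter, produces a single primitive polynomial $H$ of small degree whose vanishing locus captures $X(\bZ; \BB)$ outside a tolerable exceptional set. Each realized tuple $\UU$ then pulls back from the reduction of an integer point on the affine curve $X \cap \{H = 0\}$, and the Heath-Brown bound on integer points of bounded height on such a curve yields at most $O(W^{1+\ep})$ realized tuples, so only this many distinct $G_\UU$ occur. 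The main technical obstacle is reconciling the parameters in these two applications of Theorem \ref{affine global} so that the estimates are mutually compatible, following the iteration strategy of Salberger \cite{S2} and the adaptation in Browning \cite{B2}.
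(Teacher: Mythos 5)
Your outline recognizes the right high-level shape (one pass of Theorem \ref{affine global}(a) gives too many polynomials, and some iteration is needed), but both of the key quantitative steps in your sketch have genuine gaps, and the mechanism you propose differs from the paper's in a way that does not close.

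For part (a), you propose to bound $\#\Gamma$ by bounding the number of \emph{realized} reduction tuples $\UU$, arguing that every realized tuple comes from an integer point on the curve $X\cap\{H=0\}$, where $H$ is the auxiliary polynomial from a preliminary application of Theorem \ref{affine global}(b). But the curve $X\cap\{H=0\}$ has degree $O_d(W^{1+\ep})$ and, a priori, as many as $O(W^{1+\ep})$ components, possibly including lines; Heath-Brown's counting theorems for integer points on such a curve give nothing better than $O(W^{1+\ep}\max_i B_i)$, which is far larger than $W^{1+\ep}$ (indeed $W$ is roughly $B^{c}$ for some $c<1$ in the application, so $\max_iB_i\gg W$). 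The paper sidesteps this entirely: it builds $\Gamma$ iteratively over a nested sequence of primes $p_1<\cdots<p_{t+1}$, and at each stage discards all irreducible factors $g$ of $G_{P_1,\ldots,P_j}$ \emph{except} those for which $\{g=0\}\cap X$ contains a curve already lying in $Y_0\cap X$. Since $Y_0\cap X$ has at most $O_d(\deg G_0)=O_d(W^{1+\ep})$ irreducible components by Bezout, this single filter caps $\#\Gamma$ at $O_d(W^{1+\ep})$ regardless of how many tuples or integer points there are. No bound on realized tuples is needed.

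For part (c), your proposed bound via Ekedahl's sieve on the singular locus of $X$ cannot give $O(W^{2+\ep})$. Lemma \ref{Ekedahl} with $n=3$, $k=2$ yields $O(r^3/(M\log M)+r^{2})$, and the second term $r^2\asymp(\max_i B_i)^2$ already exceeds $W^{2+\ep}$ for the ranges that matter, independently of how $M$ is chosen. The $O(W^{2+\ep})$ count of points missed by $\Gamma$ in the paper comes from an entirely different mechanism: a point not captured by $\Gamma$ must at some stage $j$ jump between irreducible components of the nested auxiliary surfaces $Y(P_1,\ldots,P_j)$ and $Y(P_1,\ldots,P_{j+1})$, forcing it onto a zero-dimensional intersection of two coprime auxiliary polynomials with $F$; Bezout bounds the number of such points at each of the $t=O_d(\log T/\log\log T)$ stages by $O_d\bigl((1+(p_1\cdots p_j)^{-1}W)\log W\cdot(1+(p_1\cdots p_{j+1})^{-1}W)\log W\bigr)$, and summing over $j$ gives $O_{d,\ep}(W^{2+\ep})$. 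The nested prime sequence and the filtration against $Y_0\cap X$, neither of which appear in your proposal, are what make both (a) and (c) work.
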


\begin{proof} The proof given here follows the proof of Lemma 2.8 in \cite{S2} and Theorem 1.1 in \cite{X1}. By Theorem \ref{affine global}, there exists an affine surface $Y_0$, defined over $\bZ$, which contains $X(\bZ; \BB)$ but does not contain $X$ as a component, which is defined by a polynomial $G_0$ of degree $O_{d} \left(W^{1 + \ep}\right)$. \\ \\
Let $p_1 < \cdots < p_{t+1}$ be the sequence of increasing consecutive primes satisfying $p_1 > \log (B_1 B_2 B_3)$, and 
\[p_1 \cdots p_{t} \leq WT^\ep < p_1 \cdots p_{t+1}.\]
Write 
\[Q_t = \prod_{i =1 }^{t+1} p_i.\]
Then following the arguments in \cite{X1}, we see that $Q_t = O(W^{1 + \ep} \log W)$. \\ \\
We now begin constructing the set of polynomials $\Gamma$. By Theorem \ref{affine global}, for each point $P_{p_1} \in X(\bF_{p_1})$ there exists a surface $Y(P_{p_1})$ containing $X(\bZ;\BB; P_{p_1})$ but does not contain $X$ as a component of degree
\[O_{d} \left((1 + p_1^{-1} W) \log W\right).\]
$Y(P_{p_1})$ is defined by a polynomial $G_{P_{p_1}}$ with integer coefficients. Put $\Gamma(P_{p_1})$ for the set of irreducible factors $g$ of $G_{P_{p_1}}$ for which the intersection of $\{g = 0 \} \cap X$ contains a curve which lies in the intersection $Y_0 \cap X$. Now put $\Gamma^{(1)} = \bigcup_{P_{p_1} \in X(\bF_{p_1})} \Gamma(P_{p_1})$. Note that $\# \Gamma^{(1)}$ is bounded by the number of components of $Y_0 \cap X$, which is bounded by $O_d\left(W^{1 + \ep}\right)$ by B\'ezout's theorem. Moreover, since for each $g \in \Gamma^{(1)}$ is a divisor of $G_{P_{p_1}}$ for some $P_{p_1} \in X(\bF_{p_1})$, its degree is at most $O_d\left((1 + p_1^{-1} W) \log W\right)$. \\ \\
Likewise, for any point $P_1 \in X(\bF_{p_1})$ and $P_2 \in X(\bF_{p_2})$, there exists a polynomial $G_{P_1, P_2}$ defining a surface $Y(P_1, P_2)$ of degree $O_{d,\ep}\left((1 + (p_1 p_2)^{-1} W) \log W\right)$ which contains $X(\bZ; \BB; P_1, P_2)$. Now put $\Gamma^{(2)}$ for the collection of polynomials $g$ dividing $G_{P_1, P_2}$ for some $P_1 \in X(\bF_{p_1}), P_2 \in X(\bF_{p_2})$ and such that $\{g = 0\} \cap X$ contains a curve which is also contained in $Y_0 \cap X$. Again, we see that 
\[\deg g = O_{d,\ep} \left((1 + (p_1 p_2)^{-1} W) \log W\right)\]
and
\[\#\Gamma^{(2)} = O_{d,\ep}\left(W^{1 + \ep}\right).\]
We continue this process until we reach $t+1$, and set $\Gamma = \Gamma^{(t+1)}$. We then see that
\[\# \Gamma = O_{d,\ep} \left( W^{1 + \ep} \right)\]
and for each $g \in \Gamma$ we have
\[\deg g = O_{d,\ep}(1)\]
by part a) of Theorem \ref{affine global}. \\ \\
The points which lie on the complement $\Z$ of the union of the integral points on the surfaces defined by polynomials in $\Gamma$ in $X(\bZ; \BB)$ can be estimated as follows. If $\Bx \in \Z$, then there exists $0 \leq j \leq t+1$ such that the irreducible component of $Y(P_1, \cdots, P_j)$ containing $\Bx$ and the irreducible component of $Y(P_1, \cdots, P_{j+1})$ containing $\Bx$ differ. Then $\Bx$ lies on a 0-dimensional variety defined by $F, g_1, g_2$ where $g_1, g_2$ are distinct polynomials such that $g_1 | G(P_1, \cdots, P_j)$ and $g_2 | G(P_1, \cdots, P_{j+1})$. Thus, the number of such $\Bx$ is bounded by 
\[O_d \left((1 + (p_1 \cdots p_j)^{-1} W)\log W)((1 + (p_1 \cdots p_{j+1})^{-1} W)\log W \right)\]
by B\'ezout's theorem. Since $t = O_d\left(\frac{\log T}{\log \log T}\right)$, it follows that 
\[\# \Z = O_{d,\ep} \left(W^{2 + \ep}\right)\]
as desired. 
\end{proof}

The following lemma, given as Lemma 2 in \cite{B2}, then follows:

\begin{lemma} \label{browning} Let $f(x)$ be a polynomial of degree $d$ with integer coefficients, and let $B_1, B_2, B_3$ be positive real numbers. Put $M(f; \BB)$ for the number of integer solutions to the equation
\[f(x) = yz^k\]
satisfying 
\[B_1/2 \leq x < B_1, B_2/2 \leq y < B_2, B_3/2 \leq z < B_3.\]
Then 
\[M(f; \BB) = O_{d, \ep, k}\left( (B_1 B_2 B_3)^\ep W \left(W + B_1 B_3^{-k} + B_3^{1/d} \right) \right).\] 
\end{lemma}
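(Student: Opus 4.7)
The plan is to realise the equation $f(x) = vz^k$ as the vanishing locus of $F(x, v, z) := f(x) - vz^k$, which defines an affine surface $X \subset \bA^3$, and to invoke the preceding lemma with box parameters $\BB=(B_1,B_2,B_3)$. Since $F$ is linear in $v$ with nonzero $v$-coefficient $-z^k$, it is absolutely irreducible over $\bC$, so $X$ is geometrically integral as required by the hypotheses. The preceding lemma then delivers a family $\Gamma$ of $O_{d,\ep}(W^{1+\ep})$ auxiliary polynomials of degree $O_{d,\ep}(1)$, each coprime with $F$, together with the bound that the integer points of $X$ in the given box not lying on any $\{G=0\}$, $G\in\Gamma$, number at most $O_{d,\ep}(W^{2+\ep})$. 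After absorbing the $(B_1B_2B_3)^\ep$ factor, this accounts for the $W^2$ summand of the target bound. It remains to estimate, for each $G\in\Gamma$, the integer points on the bounded-degree curve $C_G := X \cap \{G=0\}$ in the box.

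I would decompose each $C_G$ into irreducible components and separate them by whether $z$ is constant on the component. On a component with $z = z_0$ constant, any integer point satisfies $v = f(x)/z_0^k$, hence $z_0^k \mid f(x)$; for $z_0$ coprime to $\Delta(f)$, which excludes only $O_d(1)$ values of $z_0$, Hensel's lemma yields at most $d$ admissible residue classes of $x$ modulo $z_0^k$, and thus $O_d(B_1 z_0^{-k} + 1) = O_d(B_1 B_3^{-k} + 1)$ integer values of $x$ in the box, each fixing $v$ uniquely. Summing over $\Gamma$ supplies the $W B_1 B_3^{-k}$ term, with the residual $O(W)$ dominated by the $W^2$ contribution.

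For a component on which $z$ varies, I eliminate $v$ by means of the relation $v = f(x)/z^k$ (permissible since $z \neq 0$ throughout the box) and clear denominators, obtaining a plane curve $H(x, z) = 0$ of bounded degree containing the projection of the component to the $(x, z)$-plane. Any irreducible factor of $H$ of degree at least two is handled by the Bombieri--Pila real-analytic determinant method, which produces $O_{d,\ep}(B_3^{1/d + \ep})$ integer points in the range $|z| \asymp B_3$, while linear factors $ax + bz + c = 0$ (with $b \neq 0$, since the $b=0$ case is horizontal) reduce to the divisibility condition $z^k \mid f(x)$ along the line and contribute at most $O_d(B_1 B_3^{-k} + 1)$ integer points each, exactly as in the horizontal case. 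Summing over the $O_{d,\ep}(W^{1+\ep})$ choices of $G$ therefore yields both the $W B_3^{1/d}$ term and a further absorbable contribution to $W B_1 B_3^{-k}$.

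The most delicate step is securing the sharp exponent $1/d$ in the Bombieri--Pila estimate rather than a weaker one. Exploiting that $x$ enters the surface equation $vz^k = f(x)$ only through $f$, so that $H(x,z)$ inherits a factor of $f(x)$ to a positive power from the $v$-dependent part of $G$, one shows that the $x$-degree of every non-linear irreducible component of $H$ is a positive multiple of $d$; consequently each such component has total degree at least $d$, and the Bombieri--Pila exponent $1/\deg$ simplifies to $1/d$. (The degenerate case $\deg_v G = 0$, in which $G$ depends only on $x$ and $z$, is subsumed by a direct plane-curve analysis on $G$ itself.) Combining the three contributions---the exceptional set, horizontal components, and non-horizontal components---produces the asserted bound $M(f;\BB) \ll_{d,\ep,k}(B_1B_2B_3)^\ep W(W + B_1B_3^{-k} + B_3^{1/d})$.
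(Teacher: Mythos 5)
The paper offers no proof of Lemma \ref{browning}: it is imported from Browning \cite{B2} (Lemma 2) with the remark that the hypothesis that $f$ be irreducible can be dropped, and the assertion that the lemma ``then follows'' from the preceding $\Gamma$-family lemma. Your reconstruction adopts the correct skeleton---the preceding lemma supplies the $W^{2+\ep}$ exceptional set and $O_{d,\ep}(W^{1+\ep})$ auxiliary bounded-degree curves $\{G=0\}\cap X$, and one must bound the integer points on those curves---but your derivation of the $B_3^{1/d}$ term has a genuine gap.

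The pivotal structural claim, that after eliminating $v$ via $v=f(x)/z^k$ the polynomial $H(x,z)$ has every non-linear irreducible factor with $x$-degree a positive multiple of $d$, is false. Writing $G=\sum_{j=0}^{r}g_j(x,z)v^j$ one gets $H=\sum_{j=0}^{r}g_j(x,z)\,f(x)^j\,(-z^k)^{r-j}$; the $j=0$ term $g_0(x,z)(-z^k)^r$ carries no factor of $f$, so $H$ is in general not divisible by $f$ and the $x$-degrees of its factors are unconstrained. For instance with $f(x)=x^2+1$ (so $d=2$) and $G=v-x^3$ one finds $H=x^3z^k+x^2+1$, irreducible for $k\geq 2$, with $\deg_x H=3$. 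Worse, when $\deg_v G=0$---a case you wave away parenthetically---the polynomial $H$ is just $G$ itself, an essentially arbitrary bounded-degree plane polynomial. If such a $G$ is, say, an irreducible conic or cubic while $d$ is large, the Bombieri--Pila exponent for $\{G=0\}$ is $1/2$ or $1/3$, not $1/d$, and your argument offers no mechanism to recover $B_3^{1/d}$. The rescue must come from the additional constraint $z^k\mid f(x)$ satisfied by every counted point on $X$; making that divisibility quantitative along a fixed low-degree $(x,z)$-curve is the genuinely non-trivial content of the $B_3^{1/d}$ term, and it is not a corollary of Bombieri--Pila applied to $H$ alone.

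A separate warning: your Hensel step for horizontal components requires $z_0$ coprime to $\Delta(f)$, which is vacuous if $\Delta(f)=0$, and indeed the lemma as stated here---with Browning's irreducibility hypothesis removed but no separability assumption put in its place---appears to be false. Take $d=5$, $k=4$, $f(x)=(x-1)^4(x-2)$ and $B_1\asymp B_2\asymp B_3\asymp N$: the line $(x,v,z)=(1+z,\,z-1,\,z)$ lies on the surface $f(x)=vz^4$ and contributes $\gg N$ integer points to $M(f;\BB)$, while one computes $T\asymp N^5$, $W\asymp N^{1/\sqrt{5}}$, and the right-hand side of the stated bound is $O_\ep\big(N^{2/\sqrt{5}+\ep}\big)=o(N)$. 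In the paper's application this is harmless because there $z>\xi_2>B\geq |x|$, which kills any such line; but a blind proof of the lemma exactly as stated cannot succeed.
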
 

We remark that in \cite{B2} Browning had required that $f$ be irreducible, but this is not a necessary assumption. Indeed, the surface defined by $f(x) = yz^k$ is geometrically integral whenever $f$ is not identically zero, therefore the machinery established by Salberger in \cite{S2} will be applicable. \\

We may now finalize the proofs of Theorem \ref{MT1} and Theorem \ref{MT2} when $d \geq 5$. 

\subsection{Estimate of $N_3(B)$}\label{subsect N3} Following the strategy of Browning in \cite{B2}, we may fix $n-1$ variables, say $x_2, \cdots, x_n$, and reduce the problem to the single variable case. For a positive number $H$, put
\[R(F; B, H) = \#\{\Bx \in \bZ^n : |x_i| \leq B, y \ll B^d/H^k, H/2 \leq z < H, F(\Bx) = yz^k\}.\] 
We then see, by dyadic summation, that
\[N_3(B) \ll (\log B) \sup_{\xi_2 < H \ll B^{d/k}} R(F; B, H).\]
Put $F_{\Bm}(x) = F(x, m_2, \cdots, m_n)$, where $\Bm = (m_2, \cdots, m_n)$. Then
\begin{equation} R(F;B,H) \leq \sum_{|m_2|, \cdots, |m_n| \leq B} R(F_{\Bm}; B, H).\end{equation}
Put $H = B^\eta$ and $e = \deg F_{\Bm}(x)$. We can assume that the $x_1^d$ coefficient in $F$ is non-zero, eventually after applying unimodular transformation to the variables. Then we see from (\ref{W}) and our hypothesis that 
\[W =  \exp\left(\sqrt{\frac{(\log B)\eta (\log B) (d \log B - \eta k \log B)}{d \log B}} \right) = B^{\sqrt{\eta(1 - k\eta/d)}}.\]
We need to ensure that $F(x_1, \mathbf{n})$ is non-singular as a polynomial in a single variable. Let $\Delta(x_2, \cdots, x_n)$ be the discriminant of $F(x_1, \cdots, x_n)$ where we view the $x_2, \cdots, x_n$ as coefficients of $F(x_1, \mathbf{x})$. Then $F(x_1, \Bm)$ is singular if and only if $\Delta(m_2, \cdots, m_n) = 0$. Thus we may apply Ekedahl's sieve to conclude that such tuples are negligible. \\

We then apply Lemma \ref{browning} to see that
\[R(F_{\Bm}; B, H) \ll \left(BH \frac{B^d}{H^k}\right)^\ep B^{\sqrt{\eta (1 - k\eta/d)}}\left(B^{\sqrt{\eta(1 - k \eta/d)}} + B^{1 - k \eta} + B^{\eta/d}\right). \]
We note that for any $v > 0$, the quadratic function
\[y = x(1 - vx)\]
is concave down and decreasing on the positive real line. Therefore, the maximum value for the term $\sqrt{\eta(1 - k\eta/d)}$ is achieved when $\eta$ is minimum. Since $H \gg \xi_2$ it follows that $\eta \geq 1$, so the maximum occurs when $\eta = 1$. Thus the maximum value is
\[\sqrt{1 - k/d}.\]
Note that 
\[\sqrt{1 - \frac{k}{d} }> 1 - k \text{ and } \sqrt{1 - \frac{k}{d} } > \frac{1}{d}.\]
It thus follows that
\[N_3(B) = O_{d,n,\ep} \left(B^{n-1 + 2 \sqrt{\eta(1 - k\eta/d)} + \ep}\right).\]
We want to have $N_3(B) = O(B^{n-\epsilon})$, so that to prove both Theorem \ref{MT1} and Theorem \ref{MT2}, we shall simply insist that
\[2 \sqrt{1 - k/d} < 1,\]
which is equivalent to $k > 3d/4$. This finishes the proofs.

\subsection*{Acknowledgements} We would like to thank the referee for her/his very careful reading and pointing out several issues which were addressed in the current version. K. Lapkova is supported by a Hertha Firnberg grant [T846-N35] of the  Austrian Science Fund (FWF).

\end{document}